\documentclass[12pt]{article}
\usepackage[a4paper,left=2cm,right=2cm,top=2cm,bottom=2cm]{geometry}

\usepackage{graphicx}
\graphicspath{{figures/}}

\usepackage[backend=bibtex8,%
]{biblatex}
\addbibresource{additive.bib}

\usepackage{amsmath,amssymb}
\usepackage{amsthm}
\newtheorem{theorem}{Theorem}
\newtheorem*{theorem*}{Theorem}
\newtheorem{proposition}{Proposition}
\newtheorem{corollary}{Corollary}

\theoremstyle{definition}
\newtheorem{definition}{Definition}

\newtheorem{remark}{Remark}
\newtheorem{example}{Example}
\newtheorem{nonexample}{Non-example}

\usepackage{mathtools}
\usepackage{enumitem}

\usepackage[x11names]{xcolor}

\usepackage{tikz}
\usetikzlibrary{calc}
\usetikzlibrary{
positioning
}

\newcommand{\N}{\mathbb{N}}
\newcommand{\Z}{\mathbb{Z}}

\usepackage{doi}
\usepackage{hyperref}

\title{Additive systems for $\Z$ are undecidable}
\author{Andrei Zabolotskii}
\date{}

\AtEndDocument{%
	\par
	\medskip
	\begin{tabular}{@{}l@{}}%
		{School of Mathematics and Statistics, The Open University,}\\
		{Milton Keynes, MK7 6AA, United Kingdom}\\
		\texttt{andrei.zabolotskii\raisebox{-0.4ex}{{\fontfamily{pag}\selectfont @}}open.ac.uk}
\end{tabular}}

\begin{document}

\maketitle

\begin{abstract}
What are the collections of sets ${A}_i\subset\Z$ such that any $n\in\Z$ has exactly one representation as $n=a_0+a_1+\dotsb$ with $a_i\in{A}_i$? The answer for $\N_0$ instead of $\Z$ is given by a theorem of de Bruijn. We describe a family of natural candidate collections for $\Z$, which we call canonical collections. Translating the problem into the language of dynamical systems, we show that the question of whether the sumset of a canonical collection covers the entire $\Z$ is difficult: specifically, there is a collection for which this question is equivalent to the Collatz conjecture, and there is a well-behaved family of collections for which this question is equivalent to the universal halting problem for Fractran and is therefore undecidable.
\end{abstract}

\section{Introduction}
Let $\mathbb{M}$ be a commutative monoid, with additive notation. Throughout this paper, $\mathbb{M}$ is almost always going to be either the nonnegative integers $\N_0$ or the integers $\Z$.

Let $\mathcal{A}=\{A_0,{A}_1,\dotsc\}$ be a collection of subsets of $\mathbb{M}$, finite or countably infinite. The sets ${A}_i$ themselves also can be finite or infinite. We require that each ${A}_i$ contains 0 and at least one more element.

\begin{definition}\label{def:basic}~\par
\begin{itemize}
\item The \emph{sumset} of $\mathcal{A}$ is the set of sums $a_0+a_1+\dotsb$ where $a_i\in A_i$, for all $i$, and all but finitely many $a_i$ are equal to~0. The sumset of $\mathcal{A}$ is denoted $\sum\mathcal{A}$ or $A_0+A_1+\dotsb$
\item The sumset of $\mathcal{A}$ is \emph{direct} when for any $n\in\sum\mathcal{A}$, its representation as $n=a_0+a_1+\dotsb$ is unique. The direct sumset of $\mathcal{A}$ is denoted $\bigoplus\mathcal{A}$ or $A_0\oplus A_1\oplus\dotsb$
\item An \emph{additive system} for $\mathbb{M}$ is a collection $\mathcal{A}$ such that $\bigoplus\mathcal{A} = \mathbb{M}$.
\end{itemize}
\end{definition}

Additive systems for $\mathbb{M}=\N_0$ are fully described by a foundational result in this area, de Bruijn's Theorem. We state it in Section~\ref{sec:debr}.

In this work, we focus on additive systems for $\mathbb{M}=\Z$, which are notoriously hard to deal with. We define \emph{canonical collections}, which simultaneously generalise in a natural way additive systems for $\N_0$ and basic examples of additive systems for $\Z$. We then switch from the additive combinatorics point of view on canonical collections to the dynamical systems perspective. This makes it possible to construct remarkable and unexpected equivalences between questions from additive combinatorics (whether certain canonical collections are additive systems for $\Z$) and famous difficult and even undecidable problems, namely the Collatz conjecture and the universal halting problem. For the latter problem, we make use of Conway's esoteric programming language Fractran.

We introduce canonical collections in Section~\ref{sec:canonical} and prove our main results, the Collatz conjecture and halting problem equivalences, in~Section~\ref{sec:undecidable} as Theorem~\ref{th:Collatz}, Theorem~\ref{thm:Fractran-type}, and Corollary~\ref{cor:undecidable}. The diagram in Fig.~\ref{fig:Venn} shows the relationship between the sets of collections considered in this paper.

\section{Additive systems for $\N_0$ and de Bruijn's theorem}
\label{sec:debr}

We will denote the integer interval $\{0,1,\dotsc,n-1\}$ for some $n\in\N$ by $[n]$. For a set $A\subset\mathbb{M}$ and $k\in\mathbb{M}$, we will denote the set $\{ka \mid a\in A\}$ by $kA$. We will write $k\mathcal{A}$ for $\{kA_0, kA_1,\dotsc\}$.

It is easy to find an example of an additive system for $\N_0$.
\begin{example}[decimal numeral system]
\label{ex:decimal}~

The collection $\{[10],10[10],100[10],\dotsc,10^k[10],\dotsc\} =$
\[
\begin{aligned}
& \{ \\
&\quad \{0,1,2,3,4,5,6,7,8,9\}, \\
&\quad \{0,10,20,30,40,50,60,70,80,90\}, \\
&\quad \{0,100,200,300,400,500,600,700,800,900\}, \\[-0.2em]
&\quad ... \\[-0.5em]
& \}
\end{aligned}
\]
is an additive system for $\N_0$. Finding the representation of a number as a sum of elements of the sets from this collection is equivalent to computing its decimal expansion; e.g., $538 = \dotsb+0+0+500+30+8$.
\end{example}

\begin{definition}
\label{def:contraction}
Let $\mathcal{A}$ be a collection with a direct sumset and let $\mathcal{B}\subset\mathcal{A}$. The \emph{contraction} of $\mathcal{A}$ with respect to $\mathcal{B}$ is the collection
 $\left(\mathcal{A}\setminus\mathcal{B}\right)\cup\left(\bigoplus\mathcal{B}\right)$.
\end{definition}
The sumset of a contraction is direct too. In particular, when $\mathcal{A}$ is an additive system, then so are all of its contractions.

Example~\ref{ex:decimal} can be generalised to a collection of subsets of $\N_0$ corresponding to a numeral system with a varying base. De Bruijn called such collections ``British number systems'', referring to the systems of weight measurement units like the one presented below as Example~\ref{ex:british}.
\begin{definition}
\label{def:british}
Let $(b_i)_{i\in\N_0}$ be a sequence of integers, called \emph{bases}, such that $b_i>1$ for all $i$.
A \emph{British number system} is the collection $\{A_i\}_{i\in\N_0}$ with $A_i=\left(\prod_{j=0}^{i-1}b_j\right) [b_i]$.

A \emph{numeral system} is a British number system such that its sequence of bases is constant.
\end{definition}

In each of the following examples, only the first few bases are given; after them, the sequence of bases continues in an arbitrary way (e.g.\ becomes constant~10).
\begin{example}~
\label{ex:british}
\begin{figure}[h!]
\centering
\includegraphics[width=0.35\linewidth]{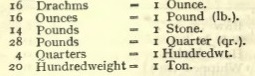}
\caption{\textit{Mrs.\ Beeton's Book of Household Management} (1907), p.~126. $(b_0,b_1,b_2,b_3,b_4,\dotsc) = (16, 16, 28, 4, 20,\dotsc)$}
\end{figure}
\end{example}
\pagebreak
\begin{example}
\label{ex:vis:British}
In a British number system $\mathcal{A}=\{A_0,A_1,\dotsc\}$, let the sequence of bases begin $b_0 = 2$, $b_1=4$, $b_2=3$.

The sets $[b_0],~~[b_1],~~[b_2]$ can be visualised as:
\begin{figure}[h!]
\centering
\includegraphics[width=\linewidth]{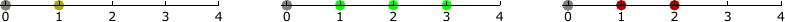}
\end{figure}

The sets $A_0=[b_0],~~A_1=b_0[b_1],~~A_2=b_0b_1[b_2]$, i.e.\ the elements of $\mathcal{A}$ themselves, are:
\begin{figure}[h!]
\centering
\includegraphics[width=\linewidth]{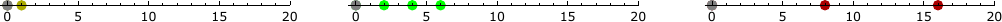}
\end{figure}

Finally, the cumulative sumsets of $\mathcal{A}$ (viewed as an ordered sequence) are \\ the sets $A_0=[b_0],~~A_0+A_1 = [b_0b_1],~~A_0+A_1+A_2 = [b_0b_1b_2]$. They cover an ever-increasing portion of $\N_0$.
\begin{figure}[h!]
\centering
\includegraphics[width=\linewidth]{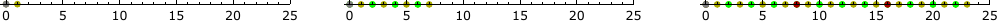}
\end{figure}

The colour of each point $n=a_0+a_1+\dotsb$,~\ $a_i\in A_i$, corresponds to the smallest $i$ such that $a_i\ne0$.
\end{example}

Any British number system or a contraction of such a system is an additive system for $\N_0$. In fact, de Bruijn showed that the converse is also true, thus completely characterising additive systems for $\N_0$.
\begin{theorem*}[de Bruijn \cite{deBruijn}]
\label{thm:deBruijn}
Any additive system for $\N_0$ is a contraction of a British number system.
\end{theorem*}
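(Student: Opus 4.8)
The plan is to reconstruct a British number system from an arbitrary additive system $\mathcal{A}=\{A_0,A_1,\dotsc\}$ for $\N_0$ by repeatedly peeling off a ``least significant block'' of the form $[b]$. It is convenient to iterate Definition~\ref{def:contraction} freely, i.e.\ to allow replacing several (possibly infinitely many) disjoint sub-collections by their direct sumsets at once — equivalently, to group the sets of a British number system according to a partition of its index set — so that what we actually build is a British number system of which $\mathcal{A}$ arises in this way.

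First I would locate the block to peel off. Since $\bigoplus\mathcal{A}=\N_0$, the element $1$ has a unique representation, so exactly one set of $\mathcal{A}$ contains $1$; call it $A^{*}$, and set $S=\bigoplus_{A\in\mathcal{A}\setminus\{A^{*}\}}A$, so that $A^{*}\oplus S=\N_0$. If $A^{*}$ is the only set then $A^{*}=\N_0$ and $\mathcal{A}$ is the contraction of (say) the binary numeral system that merges all of its sets; otherwise put $b=\min(S\setminus\{0\})\ge2$. A short argument from uniqueness of representations then gives $[b]\subseteq A^{*}$, $b\notin A^{*}$, $b\in S$ and $S\cap[b]=\{0\}$.

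The technical heart is the assertion that $A^{*}=[b]\oplus(A^{*}\cap b\N_0)$ and $S\subseteq b\N_0$. I would prove both simultaneously by induction on $N$: assuming that $A^{*}\cap[N]$ already exhibits the claimed product structure and that $S\cap[N]\subseteq b\N_0$, consider the unique representation $N=\alpha+\sigma$ with $\alpha\in A^{*}$, $\sigma\in S$, and split into cases according to which of $\alpha,\sigma$ vanish; in each case one assembles, from some $r\in[b]\subseteq A^{*}$ together with a lower-order piece supplied by the induction hypothesis, a would-be second representation of $N$, which by uniqueness either forces the divisibility and structure we want at $N$ or is absurd. (One can instead run this through the formal power series identity $\prod_i f_{A_i}(x)=(1-x)^{-1}$, exploiting that each $f_{A_i}$ has nonnegative coefficients.) I expect this to be the main obstacle: the idea is simple, but the case bookkeeping in the inductive step is where the real work sits.

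Granting this, the recursion is routine. If $A^{*}\ne[b]$, set $D:=A^{*}\cap b\N_0$ and replace $A^{*}$ in $\mathcal{A}$ by the two sets $[b]$ and $D$; the resulting collection $\tilde{\mathcal{A}}$ is again an additive system for $\N_0$ (directness of $[b]\oplus D=A^{*}$ combines with that of $\mathcal{A}$), $\mathcal{A}$ is its contraction, and now $[b]\in\tilde{\mathcal{A}}$ outright (if $A^{*}=[b]$ already, take $\tilde{\mathcal{A}}=\mathcal{A}$). A straightforward induction shows that whenever $[b]$ is a member of an additive system for $\N_0$, the direct sum of the remaining sets equals $b\N_0$; hence $\bigoplus\bigl(\tilde{\mathcal{A}}\setminus\{[b]\}\bigr)=b\N_0$, and dividing through by $b$ yields an additive system for $\N_0$ to which we recurse, producing bases $b_0,b_1,b_2,\dotsc$ with each $b_k\ge2$. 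Using that scaling commutes with contraction and unwinding the recursion, $\mathcal{A}$ is a contraction of $\{[b_0],\,b_0[b_1],\,b_0b_1[b_2],\dotsc\}$, the British number system with bases $(b_k)$ — the only points needing a word being the passage to the infinite system (each $n$ has its $\mathcal{A}$-representation determined once $b_0\dotsm b_{k-1}>n$, and $b_0\dotsm b_{k-1}\to\infty$) and the terminating case, where the remaining system has become $\{\N_0\}$ and one invokes the base case above.
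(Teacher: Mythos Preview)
Your proposal is correct and follows essentially the same route as the paper's sketch: isolate the set $A^{*}$ containing $1$, peel off a block $[b]$ so that $A^{*}=[b]\oplus A_0'$ and the remaining collection lies in $b\N_0$, then divide by $b$ and recurse. The only cosmetic difference is that you define $b$ as $\min(S\setminus\{0\})$ whereas the paper takes $b$ to be the least integer \emph{not} in $A^{*}$; your own observations $[b]\subseteq A^{*}$ and $b\notin A^{*}$ show these coincide, and the paper likewise flags that the ``apply the lemma repeatedly'' step conceals the technical work you describe in your final paragraph.
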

To prove this, first a lemma is proved: any additive system $\mathcal{A}$ must contain a set $A_0$ of the form $[b]\oplus A_0'$, where $A_0$ is the set that contains~1, which must exist, and $b$ is the smallest number not in $A_0$; and then $\left(\mathcal{A}\setminus\{A_0\}\right)\cup\{A_0'\}$ is of the form $b\mathcal{A}'$ with $\mathcal{A}'$ an additive system. The theorem follows from applying the lemma repeatedly. The previous sentence hides significant technical complexity, as explained in the work \cite{Nathanson} that gives an exposition of the theorem and its proof.

\section{Additive systems for $\Z$ and canonical collections}
\label{sec:canonical}
In this paper, we are primarily concerned with additive systems for $\Z$.
As Nathanson puts it \cite{Nathanson}, 
\begin{quote}
de Bruijn's remark at the end of his 1956 paper on $\N_0$ still accurately describes the current state of the problem: ``$\langle\ldots\rangle$ the~analog\-ous problem for number systems representing uniquely all integers (without restriction to nonnegative ones) $\langle\ldots\rangle$ is~much more difficult than the one dealt with above, and it is still far from a complete solution.''
\end{quote}

The existing literature on additive systems for $\Z$ mostly focuses on the case of additive systems of cardinality~2, i.e.\ $\Z=A\oplus B$. This problem, and the more general problem known as factorisation of abelian groups, have been studied since the 1940s (by de Bruijn and Haj\'os) and to this day. The known results are surveyed in \cite{book:SzaboSands}. Many of them are of the form ``if $\Z=A\oplus B$ and $A,B$ satisfy certain conditions, then there exists another set $A'$ such that $\Z=A'\oplus B$'', e.g.: if $\Z=A\oplus B$ with $A$ finite and $k$ is an integer coprime to $|A|$ then $\Z=(kA)\oplus B$. There exist families of additive systems $\{A,B\}$ for $\Z$, where $A$ is a finite set and $B$ is infinite periodic, with the period of $B$ bounded from above or from below in the terms of the diameter of $A$ \cite{book:SzaboSands}. It is known that for any sparse enough infinite set $A$ of nonnegative integers there exists $B$ such that $\Z=A\oplus B$, so for infinite $A$ and $B$, ``no structure should be expected'' \cite{book:ww}. Finally, the collections that have $[N]$ as the direct sum have been completely described and enumerated \cite{cardiff23}, being essentially finite British number systems and their contractions.

For our theorems, it will be essential to consider additive systems containing an infinite number of sets.

Example~\ref{ex:decimal} and numeral systems in general can be turned into additive systems for $\Z$, as the following well-known examples show.

\begin{example}[negabinary numeral system]
\label{ex:-bin}
The collection $\{A_i\}_{i\in\N_0}$ with $A_i=(-2)^i[2]$ is an additive system for $\Z$.
\end{example}

\begin{example}[balanced ternary numeral system]
\label{ex:b3}
The collection $\{A_i\}_{i\in\N_0}$ with $A_i=3^i\{-1,0,1\}$ is an additive system for $\Z$. In other words, any integer $n$ can be written uniquely in the numeral system with base~3 using the digits 0, 1, and $-1$ with the sequence of digits $d_i$ (for any fixed $n$ and for $i$ large enough, $d_i=0$), where $3^i d_i = a_i\in A_i$. E.g.\ $-35 = ...+0\cdot81+(-1)\cdot27+(-1)\cdot9+0\cdot3+1\cdot1$.
\end{example}

Examples \ref{ex:-bin} and \ref{ex:b3} motivate the following definition.

\begin{definition}
\label{def:main}
Let $(b_i)_{i\in\N_0}$ be a sequence of integers $b_i>1$, called \emph{bases}. Let $(T_i)_{i\in\N_0}$ be a sequence of sets such that $T_i$ is a complete residue system modulo $b_i$ that includes $0$ for each $i$. A \emph{canonical collection} is a collection of sets which for some sequences $(b_i)$ and $(T_i)$ has the form
\[
\{T_0,\ b_0T_1,\ b_0b_1T_2,\ b_0b_1b_2T_3,\ b_0b_1b_2b_3T_4,\ ...\}.
\]
A canonical collection is called \emph{complete} if it is an additive system for $\Z$.
\end{definition}
In fact, it is enough to specify the sequence $(T_i)$ because $b_i=|T_i|$ for all $i$.

The concept of a canonical collection is also illustrated in Example~\ref{ex:vis:canonical} below.

Note the following facts about canonical collections.
\begin{enumerate}
\item \label{p:dir} The sumset of any canonical collection is direct.
	\begin{proof}
	Let $n = a_0+a_1+\dotsb +a_k$ where $a_i\in A_i = b_0b_1\dotsb b_{i-1}T_i$ for all $i$. Then all $a_i$s can be determined from $n$ uniquely: by induction, suppose that $a_0,a_1,\dots,a_{i-1}$ are already known for some $i$; then $ n - (a_0+\dotsb+a_{i-1})$ has to be a multiple of $b_0b_1\dotsb b_{i-1}$ (because so is $a_{i'}$ for every $i'\geqslant i$) and $a_i$ has to be chosen as the unique element of $A_i$ with the same residue modulo $b_0b_1\dotsb b_{i}$ (because $a_{i''}$ is a multiple of $b_0b_1\dotsb b_{i}$ for every $i''>i$).
	\end{proof}
\item The additive systems for $\Z$ from Examples \ref{ex:-bin} and \ref{ex:b3} are canonical collections, defined by $T_i=\{0,(-1)^i\}$ and $T_i=\{-1,0,1\}$ respectively.
\item \label{p:sumNZ} Let $\mathcal{A}=\{A_0,A_1,\dotsc\}$ be a canonical collection with $\bigoplus\mathcal{A}=\Z$. Let $k\in\N_0$
 and $N=b_0b_1\dots b_k$. Then $\{A_{k+1}, A_{k+2},\dotsc\}$ is an additive system for $N\Z$ and $\{A_0,\dotsc,A_k\}$ taken modulo $N$ is an additive system for $\Z/N\Z$.
\item When $T_i=[b_i]$ for all $i$, a canonical collection becomes a British number system.
\item While a British number system has a natural ordering of its elements, there is generally no such ordering for canonical collections when viewed as unordered sets of subsets of $\Z$, even though an ordering is used in defining it.
	\begin{nonexample}
	Consider a canonical collection with $b_0=2$, $T_0=3[2]$, $b_1=3$, $T_1=[3]$ and another canonical collection with $b'_0=3$, $T'_0=2[3]$, $b'_1=2$, $T'_1=[2]$ and all other data the same. Then $A_0=A'_1$ and $A_1=A'_0$, so these two collections are identical and there is no way to reconstruct the order in which their elements were constructed just from the elements themselves.
	\end{nonexample}
\item Not every canonical collection is complete, even if its sumset is unbounded both from above and below.
	\begin{nonexample}
	\label{ex:badnbin}
	Let $b_i=3$ and $T_i=\{-2,0,2\}$ for all $i\in\N_0$. Then the sumset of the corresponding canonical collection is $2\Z$, so this collection is not complete.
	\end{nonexample}
	Generally, even if the elements from the union of a canonical collection have no common factor, the sumset of that collection can be sparse and have gaps.
	In that case, using the sequences $(b_i)$ and $(T_i)$, one can still formally write down the expansion $n=a_0+a_1+\dotsb$ (as noted in the proof for Fact \ref{p:dir}) even for $n$ not in the sumset, but that expansion will not eventually settle to zero.
\item Not every additive system for $\Z$ is a contraction of a canonical collection (i.e.\ there is no analogue of de~Bruijn's theorem for the integers).
	\begin{nonexample}[the smallest so-called tame aperiodic canon \cite{aperiodic}]
	\label{ex:aperiodic}~

	Let $A=\{0, 1, 5, 6, 12, 25, 29, 36, 42, 48, 49, 53\}$. The collection on the right-hand side of
	\[
	\Z = A \oplus 18[2] \oplus 8[3] \oplus 72\Z
	\]
	is not a canonical collection, nor a contraction of one. Indeed, suppose otherwise; $A$ has no common factor, therefore it is either $A_0=T_0$ itself or its sumset with some other $A_i$s, and $b_0|12$; the residues modulo $b_0$ in any direct sumset that includes $A_0$ have to be uniformly distributed, which means $b_0=2$ and then necessarily $A_0=\{0,1\}$; however then the odd half of $A$ would be a translation of the even half of $A$ by 1, which is false.
	\end{nonexample}

\begin{figure}
\centering
\begin{tikzpicture}
\newdimen\ybot
\newdimen\ytop
\newdimen\th
\newdimen\ra
\newdimen\ybotf
\newdimen\ytopf
\newdimen\xaz
\newdimen\gap
\ybot=2cm
\ytop=6cm
\th=0.05cm
\ra=0.4cm
\ybotf=3.3cm
\ytopf=5.5cm
\xaz=13cm
\gap=0.03cm

\draw[rounded corners=\ra, line width=\th, Blue4] (0, 0) rectangle (15.5cm, \ytop); 
\draw[rounded corners=\ra-2*\th-2*\gap, line width=\th, Blue2] (2*\th+2*\gap, \ybot-\th-\gap) rectangle (15.5cm-\th-\gap, \ytop-2*\th-2*\gap); 
\draw[rounded corners=\ra-3*\th-3*\gap, line width=\th, black] (3*\th+3*\gap, \ybot) rectangle (8cm-\th-\gap, \ytop-3*\th-3*\gap); 
\draw[rounded corners=\ra-4*\th-4*\gap, line width=\th, gray] (4*\th+4*\gap, \ybot+\th+\gap) rectangle (4cm-\th, \ytop-4*\th-4*\gap); 
\draw[rounded corners=\ra-\th-\gap, line width=\th, Green3] (\th+\gap, \th+\gap) rectangle (8cm, \ytop-\th-\gap); 
\draw[rounded corners=\ra-\th, line width=1.0*\th, Green4] (\xaz, -0.5cm) rectangle (16.5cm, \ytop+0.5cm); 
\draw[rounded corners=\ra, line width=\th, Gold4] (\xaz-1cm, \ybotf) rectangle (\xaz+2cm, \ytopf); 
\draw[line width=2*\th, dashed, black] (\xaz, \ybot+0.5*\th) -- (\xaz, \ytop-2.5*\th-2*\gap);
\draw[line width=3*\th, dotted, Orange1] (\xaz, \ybotf+0.5*\th) -- (\xaz, \ytopf-0.5*\th);

\node[gray, text width=4cm, align=center] at (2.2cm, \ytop-1.5cm) {numeral systems \\ $b$};
\node[gray, circle, fill, minimum size=3*\th, inner sep=0pt] (ExNS) at (1.0cm, \ybot+0.5cm) {};
\node[gray, above right=0cm of {ExNS}] {Ex.~\ref{ex:decimal}};
\node[text width=4cm, align=center] at (6cm, \ytop-1.5cm) {British \\ number systems \\ (Def.~\ref{def:british}) \\ $b_0,b_1,b_2,\dotsc$};
\node[circle, fill, minimum size=3*\th, inner sep=0pt] (ExBNS) at (4.5cm, \ybot+0.5cm) {};
\node[above right=0cm of {ExBNS}] {Ex.~\ref{ex:british}};
\node[text width=4cm, align=center, Blue2] at (10cm, \ytop-1.5cm) {canonical collections \\ (Def.~\ref{def:main}) \\ $b_0,b_1,b_2,\dotsc$ \\ $T_0,T_1,T_2,\dotsc$};
\node[circle, fill, minimum size=3*\th, inner sep=0pt, Blue2] (Exnbin) at (\xaz+0.5cm, \ybot+0.9cm) {};
\node[right=0cm of {Exnbin}, Blue2] {Ex.~\ref{ex:-bin}};
\node[circle, fill, minimum size=3*\th, inner sep=0pt, Blue2] (Exbtern) at (\xaz+0.5cm, \ybot+0.3cm) {};
\node[right=0cm of {Exbtern}, Blue2] {Ex.~\ref{ex:b3}};
\node[text width=10cm, align=center, Blue4, fill=white, fill opacity=0.7, text opacity=1] at (7.75cm, \ybot-1.5cm) {$\uparrow$ contractions of the above (Def.~\ref{def:contraction}) $\uparrow$};
\node[Green3, right=0cm of {(0.5cm, \ytop+0.5cm)}] {additive systems for $\N_0$, de Bruijn's theorem};
\node[Green4, right=0cm of {(\xaz-0.5cm, \ytop+1cm)}] {additive systems for $\Z$};
\node[circle, fill, minimum size=3*\th, inner sep=0pt, Blue2] (Nonex) at (8.5cm, \ybot+0.3cm) {};
\node[right=0cm of {Nonex}, Blue2] {Non-ex.~\ref{ex:badnbin}};
\node[circle, fill, minimum size=3*\th, inner sep=0pt, Green3] (Exaper) at (16cm, \ybot+1cm) {};
\node[above=0cm of {Exaper}, Green3] {\rotatebox{90}{Non-ex.~\ref{ex:aperiodic}}};
\node[text width=2.5cm, align=center, Gold4, below=1mm of {(\xaz+0.5cm, \ytopf)}, fill=white, fill opacity=0.7, text opacity=1, rounded corners=0.5cm, inner sep=1pt] {Fractran-type \\ canon.~coll. \\ (Def.~\ref{def:Fractran-type}) \\ $F$};
\end{tikzpicture}
\caption{\label{fig:Venn}
The sets of collections considered in this paper. A numeral system is specified by a base $b$, a British number system is specified by a sequence of bases $b_0,b_1,b_2,\dotsc$,~etc. In Section~\ref{sec:undecidable}, we prove Theorem~\ref{th:Collatz} concerned with the dashed black boundary and Corollary~\ref{cor:undecidable} concerned with the dashed orange boundary.}
\end{figure}

\item
Still, some sufficient conditions for a canonical collection to be an complete can be noted.
\begin{enumerate}
\item If every $T_i$ consists of consecutive integers then the corresponding canonical collection is complete if and only if $\max(T_i)>0$ for infinitely many $i$ and $\min(T_i)<0$ for infinitely many $i$. Examples \ref{ex:-bin} and~\ref{ex:b3} satisfy these conditions.
\item \label{suff:f} If $\mathcal{A}=\{A_0,A_1,\dotsc\}$ is a canonical collection and for some $k$ the sumset $A_k\oplus A_{k+1}\oplus \dotsb$ is equal to $N\Z$, where $N=b_0b_1\dotsb b_{k-1}$, then $\mathcal{A}$ is complete (the converse of Fact~\ref{p:sumNZ}). Therefore one cannot tell whether a canonical collection is complete or not by looking only at some finite initial portion of the sequences $(b_i)$ and $(T_i)$.
\begin{proof}
Any $n\in\Z$ can be uniquely decomposed into an element of $A_0\oplus\dotsb\oplus A_{k-1}$ and a multiple of $N$ (by the same argument as in the proof of Fact~\ref{p:dir}), and the latter is an element of $A_k\oplus A_{k+1}\oplus \dotsb$ by assumption.
\end{proof}
\end{enumerate}
\end{enumerate}

\pagebreak

The following visual example mirrors Example~\ref{ex:vis:British}.
\begin{example}
\label{ex:vis:canonical}
In a canonical collection $\mathcal{A}=\{A_0,A_1,\dotsc\}$, we set $b_0 = 2$ with $T_0=\{{0,1}\}$, $b_1=4$ with $T_1=\{{0,1,-2,3}\}$, and $b_2=3$ with $T_2=\{{0,1,-1}\}$.

The sets $T_0,~~T_1,~~T_2$:
\begin{figure}[h!]
\centering
\includegraphics[width=\linewidth]{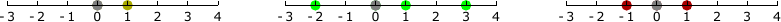}
\end{figure}

The sets $A_0=T_0,~~A_1=b_0T_1,~~A_2=b_0b_1T_2$:
\begin{figure}[h!]
\centering
\includegraphics[width=\linewidth]{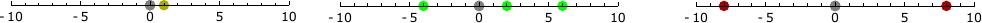}
\end{figure}

The sets $A_0,~~A_0+A_1,~~A_0+A_1+A_2$:
\begin{figure}[h!]
\centering
\includegraphics[width=\linewidth]{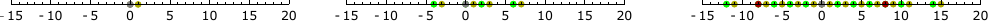}
\end{figure}

Note that these cumulative sumsets are also ever-expanding, as in the case of British number systems, even though there are gaps in them. These gaps are all eventually filled if $\mathcal{A}$ is truly an additive system for $\Z$. Moreover, any of these cumulative sumsets is a tile that tessellates $\Z$ via the one-dimensional lattice of translations generated by the translation by $b_0=2$, $b_0b_1=8$, and $b_0b_1b_2=24$, respectively.
\end{example}

\begin{remark}
In the more general case, when $\mathbb{M}$ is an arbitrary abelian group $G$, one can still define a canonical collection as follows. First we choose a (possibly left-infinite) subgroup series $\dotsb\subset G_2 \subset G_1 \subset G_0 \subset G_{-1}=G$. For each $i\in\N_0$, we form a subset of $G_{i-1}$ (and therefore $G$) by choosing a representative from each of the equivalence classes from $G_{i-1}/G_i$. These subsets of $G$ together form the canonical system.
\end{remark}

\begin{remark}
While the notation $\oplus$ is widespread, the terminology for the concepts introduced in Definition~\ref{def:basic} differs between sources. The term ``direct sumset'' is from \cite{book:ww}. The term ``additive system'' is from \cite{Nathanson}, where it was used specifically for $\mathbb{M}=\N_0$.
The term ``canonical collection'', introduced in Definition~\ref{def:main}, refers to \emph{periodic canons} that essentially underlie the canonical collections; that term, in turn, refers to canons in music \cite{aperiodic}.
\end{remark}

\section{Additive systems for $\Z$ are undecidable}
\label{sec:undecidable}
Section \ref{sec:canonical} suggests a natural question: given a canonical collection, how can we tell if it is complete, i.e.\ is an additive system for $\Z$?

There are uncountably many canonical collections, so one can expect questions like that to be complex. Still, it is remarkable that we can directly map this question to other well-known problems, as we will now show.

We will use the notation $r_b(n)$ for the (nonnegative) remainder on dividing $n$ by $b$.

Computing the decimal expansion of a number $n\in\N_0$ (recalling Example~\ref{ex:decimal}) is equivalent to repeated application of the map $f(n) = \frac{1}{10}(n - r_{10}(n))$, which removes the terminal digit of~$n$ (i.e.\ $f(n) = \lfloor n/10 \rfloor$), until $0$ is reached. More generally, computing the expansion of $n\in\N_0$ in a British number system is equivalent to computing the sequence
\begin{equation}
\label{eq:f-iter}
n,\ f_0(n),\ f_1\circ f_0(n),\ f_2\circ f_1\circ f_0(n),\ f_3\circ f_2\circ f_1\circ f_0(n), \dotsc
\end{equation}
until $0$ is reached, where $f_i(n) = \frac{1}{b_i}\left(n - r_{b_i}(n)\right)$ (i.e.\ $f_i(n) = \lfloor n/b_i \rfloor$).

More generally yet, suppose that the sequences $(b_i)$ and $(T_i)$ defining a canonical collection are given. For any integer $n$, let $t_i(n)$ be the unique element of $T_i$ that is congruent to $n$ modulo $b_i$.
Computing the (possibly infinite) expansion of $n\in\Z$ in that canonical collection is equivalent to computing the sequence from Eq.~\eqref{eq:f-iter} until $0$ is reached, this time with
\begin{equation}
\label{eq:fn}
f_i(n) = \frac{1}{b_i}\left(n - t_i(n)\right).
\end{equation}
Here $f_i$ removes the terminal digit of $n$ and adds an extra term that depends on the removed digit: 
\begin{equation}
\label{eq:fn2}
f_i(n) = \lfloor n/b_i \rfloor + f_i(r_{b_i}(n)).
\end{equation}
The floor function $\lfloor x\rfloor$ returns the greatest integer $\leqslant x$.

These maps together form a non-autonomous dynamical system, which is the point of view on the canonical collections that will be convenient for us from now on.
\begin{definition}
Let $(b_i)$ and $(T_i)$ be sequences that define a canonical collection, as in Definition~\ref{def:main}. The \emph{canonical system} associated with them is the sequence of maps $f_i$ defined by Eq.~\eqref{eq:fn}. For any $n\in\Z$, the sequence shown in Eq.~\eqref{eq:f-iter} is called the \emph{trajectory} of $n$ in the canonical system.
\end{definition}

Every canonical collection with given $(b_i)$ and $(T_i)$ corresponds to a canonical system via this definition, and conversely, every canonical system corresponds to a~canonical collection. The sets $T_i$ are related to the bases and the maps $f_i$ by $ t_i(j) = j-b_i f_i(j) $, i.e.\ instead of specifying $(T_i)$ for a canonical collection one can specify the values of $f_i(j)$ for $i\in\N_0$ and $j\in[n]$ arbitrarily, except for $f_i(0)=0$ for all $i$.

\begin{proposition}
Let $(b_i)$ and $(T_i)$ be sequences that define a canonical collection $\mathcal{A}=\{A_0,A_1,\ldots\}$ and the corresponding canonical system. Let $n\in\Z$. Then $n\in\bigoplus\mathcal{A}$ if and only if the trajectory of $n$ in the canonical system eventually becomes constant zero.
\end{proposition}
\begin{proof}
For $i\in\N_0$, we define $a_i = b_0b_1\dots b_{i-1}t_i(n_i)$, where $n_i = f_{i-1}\circ\dots\circ f_1\circ f_0(n)$ is the $i$th number in the trajectory. Then $a_i\in A_i$, moreover, $a_i$ are produced precisely by the procedure described in Fact~\ref{p:dir}. If $n\in\bigoplus\mathcal{A}$ then there exists some $I\in\N_0$ such that $n = a_0+a_1+\dots+a_{I-1}$, and in that case $n_I=0$ and $a_i=n_i=t_i(n_i)=0$ for all $i\geqslant I$, so the trajectory becomes constant zero. On the other hand, if the trajectory becomes constant zero, then the sequence $a_0,a_1,\dots$ has only a finite number of nonzero terms, so the sum $a_0+a_1+\dots$ is defined and is the desired decomposition of $n$ into elements of $A_i$.
\end{proof}
\begin{corollary}
\label{cor:coll-dyn}
A canonical collection is complete if and only if the trajectory of every integer in the corresponding canonical system eventually becomes constant zero.
\end{corollary}

The famously hard Collatz conjecture \cite{book:collatz} posits that any $n\in\N$ eventually enters the cycle $1\to4\to2\to1\to\dotsb$ under repeated application of $f_C\colon\,\N\to\N$ where $f_C(n)=3n+1$ if $n$ is odd and $f_C(n)=n/2$ if $n$ is even. The sequence $\left(f_C^i(n)\right)_{i\in\N_0}$ is called the \emph{Collatz trajectory} of~$n$.

\begin{theorem}
\label{th:Collatz}
There exists a canonical collection $\mathcal{A}_C$ such that $\mathcal{A}_C$ is complete if and only if the Collatz conjecture is true.
\end{theorem}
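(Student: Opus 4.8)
The plan is to work with the dynamical reformulation set up just above: $\mathcal{A}_C$ is complete if and only if, for every $n\in\Z$, the trajectory $n,\ f_0(n),\ f_1\circ f_0(n),\dotsc$ of the associated canonical system eventually reaches $0$ (and then stays, since $f_i(0)=0$). So it suffices to specify, for every $i$, a base $b_i>1$ together with the values $f_i(j)$ for $j\in[b_i]$ (with $f_i(0)=0$) — recall that this data determines the canonical collection, with $f_i$ acting on $\Z$ by $f_i(n)=\lfloor n/b_i\rfloor+f_i(r_{b_i}(n))$ — in such a way that all trajectories reach $0$ exactly when the Collatz conjecture holds. It is convenient to use the ``shortcut'' Collatz map $T$ given by $T(n)=n/2$ for $n$ even and $T(n)=(3n+1)/2$ for $n$ odd, altered by declaring $T(1):=0$; the Collatz conjecture is then equivalent to the assertion that the trajectory $n,T(n),T^2(n),\dotsc$ reaches $0$ for every $n\in\N$.

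The idea is to let the trajectory of each input simulate $T$. Take the bases to grow geometrically, say $b_i=2^{i+1}$. The trajectory of an integer $n\ge 0$ is designed to have two phases. In a \emph{loading} phase of roughly $\log_2 n$ steps the system peels off the low digits of $n$ one at a time and records them in a reserved block of digit positions, while the not-yet-read part of $n$ shrinks; when $n$ is exhausted the loading phase ends, and the trajectory value then encodes an integer $m=m(n)\in\N$, where $m(\cdot)$ is arranged to be onto $\N$. In the ensuing \emph{simulation} phase the value at step $i_0(n)+k$ is exactly $T^{k}(m)$, where $i_0(n)$ (of order $\log_2 n$) is the length of the loading phase. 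The reason the geometric bases make this work is a growth estimate: one application of $T$ at most doubles the value, whereas $b_{i_0(n)+k}=2^{i_0(n)+k+1}$ doubles every step and already exceeds $n$ at $k=0$; hence $T^{k}(m)<b_{i_0(n)+k}$ for all $k\ge 0$, so throughout the simulation phase the current value is a single base-$b_i$ digit and one canonical step $f_i$ can — and is defined to — carry out exactly one step of $T$ on it. Therefore the trajectory of $n$ reaches $0$ iff the Collatz trajectory of $m(n)$ reaches $1$, and since $m(\cdot)$ is onto $\N$, all trajectories reach $0$ precisely when the Collatz conjecture holds. Non-positive inputs, and transient values not in simulation form, are routed into a simulated $T$-trajectory as well (for instance $n\le 0$ may be sent straight to $0$), which preserves both directions of the equivalence. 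The fact that $T$ grows slowly — roughly, at most doubling per step — is exactly what lets this ``raw'' encoding succeed with bases of order $2^i$; faster-growing dynamics, as in the Fractran construction later, call for a less direct encoding.

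The main obstacle is reconciling the rigidity of the canonical format with the behaviour of the Collatz process. A canonical system is a straight-line computation with a fixed, monotone program counter $i$: the base $b_i$ and the map $f_i$ are fixed in advance, each step sees only the last base-$b_i$ digit, and each step necessarily divides the value by $b_i\ge 2$. The Collatz process, by contrast, has \emph{data-dependent length} — the loading phase of $n$ lasts about $\log_2 n$ steps — and \emph{growing} values. The real work is therefore: (a) to make the length of the loading phase invisible to the schedule, i.e.\ to design $f_i$ so that it acts correctly whether step $i$ is still loading one input or already simulating another — this forces a tag, or a positional convention, separating \emph{loading} states from \emph{simulation} states in the trajectory value and preserved under the divisions; (b) to keep the recorded digits and the unread part of $n$ cleanly apart while the value is repeatedly divided by the growing bases; and (c) to check that no trajectory reaches $0$ by accident — in particular that the loading phase never collapses to $0$ prematurely, that a value in simulation form equals $0$ only when the simulated $T$-value does, and that a faithfully simulated non-halting Collatz run (a nontrivial cycle, or a divergent orbit — which, by the same doubling estimate, still eventually fits in one base-$b_i$ digit) really does avoid $0$ forever. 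Point (c) is what pins down the equivalence; (a) and (b) are the delicate bookkeeping; the rest is routine manipulation of mixed-radix expansions. It is also worth noting that these growing maps $f_i$ are exactly why $\mathcal{A}_C$ cannot be ``seen to be complete (or not) by inspection'' the way a British number system can: each $f_i$ must implement one step of $T$ on the whole interval $[b_i]$, so the $f_i$ encode the Collatz map on ever larger ranges rather than settling into a periodic pattern.
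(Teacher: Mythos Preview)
Your high-level strategy --- define $f_i$ on $[b_i]$ to implement one Collatz step, take geometrically growing bases so that once the trajectory value lies in $[b_i]$ it stays in range and follows Collatz, and then argue that every integer eventually enters that regime and every Collatz starting point is hit --- is the right one, and your growth estimate for the simulation phase is correct. But the loading machinery you describe is both unnecessary and unfinished.

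The paper has no loading phase, no reserved digit blocks, and no tags. It simply takes $b_i=4^{i+1}$ and sets $f_i(j)=f_C(j)$ for $j\in[b_i]\setminus\{0,1\}$ (the ordinary, non-shortcut Collatz map) with $f_i(1)=0$. The point you are missing is that the identity $f_i(n)=\lfloor n/b_i\rfloor+f_i(r_{b_i}(n))$ already forces contraction: writing $n=b_ik_i+r_i$ gives $f_i(n)=k_i+f_i(r_i)\le k_i+3r_i+1$, and a two-line estimate shows that the value strictly decreases while $k_i>2$, and that once $k_i\le 2$ the next value is already below $b_{i+1}=4b_i$. So the canonical system's own floor-division \emph{is} the loading phase; there is nothing to design. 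Surjectivity onto Collatz starting points is then one line: the trajectory of $n\cdot\prod_{i\le L}b_i$, with $L=\lceil\log_4 n\rceil$, reaches the value $n$ at a step where $n<b_i$, because multiplying by $b_0\cdots b_L$ simply makes the first $L{+}1$ residues zero and delays entry into range.

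Two concrete problems with your plan as written. First, the parenthetical ``$n\le 0$ may be sent straight to $0$'' is not an option: in a canonical system $f_i$ is determined on all of $\Z$ by its values on $[b_i]$, so the behaviour on negatives is forced, not chosen. The paper handles negatives by a separate monotonicity estimate showing $n_i$ strictly increases until it becomes nonnegative. Second, your difficulties (a) and (b) --- making a tag survive repeated division by growing $b_i$, and keeping recorded digits separate from the unread part inside a single integer --- are genuine obstacles for the scheme you sketch, and you do not resolve them. They vanish once you stop trying to transcribe $n$ faithfully and let the contraction scramble it; the only thing to recover is surjectivity, and that is the explicit-preimage trick above.
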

\begin{proof}
Let $\mathcal{A}_C$ be the canonical collection with $b_i=4^{i+1}$ for $i\in\N_0$ and $t_i(j) = j-b_if_C(j)$ for $j\in[b_i]\setminus[2]$ and $t_i(1) = 1$. The corresponding canonical system $(f_i)_{i\in\N_0}$ satisfies
\[
f_i(j) = 
\begin{cases}
0 & \text{if } j = 1, \\
j/2 & \text{if $j$ is even}, \\
3j+1 & \text{otherwise,}
\end{cases}
\]
for $j\in[b_i]$.
(Our specific choice of $b_i$ is not the only possible one for our purposes.)

We will now show that $\mathcal{A}_C$ has the property required in the theorem, using Corollary~\ref{cor:coll-dyn}.

Suppose that the $i$th value in a trajectory in this canonical system, $n_i$, falls into the interval $[b_i]$. Then the next value in the trajectory, $n_{i+1} = f_{i}(n_i)$, is either 0 (if $n_i\in[2]$) or $f_C(n_i)$, from the definition of $\mathcal{A}_C$. Therefore $n_{i+1} \leqslant f_C(n_i) \leqslant 3n_i+1 < 3b_i+1 < 4b_i = b_{i+1}$, so $n_{i+1}\in[b_{i+1}]$. By induction, at each further step $i'>i$, the trajectory value $n_{i'}$ will always be in the interval $[b_{i'}]$. Then the trajectory itself will proceed as the Collatz trajectory of $n_i$ unless and until it reaches 1, at which point it becomes constant zero. The Collatz conjecture states that it happens for any positive integer value of $n_i$.

It remains to prove that any number $n\in\Z$ ends up on a Collatz trajectory (by getting into the interval $[b_i]$ on the $i$th step for some $i$) and that every Collatz trajectory is reached from some starting number.

Suppose first that $n$ is positive. Assume it is the $i$th value in a trajectory (possibly $i=0$), i.e.\ $n=n_i$, and $n$ is not already in $[b_i]$, i.e.\ is of the form $n_i=b_i k_i + r_i$, $r_i= r_{b_i}(n_i)$ with $k_i\in\N$. Then, recalling Eq.~\eqref{eq:fn2},
\begin{multline*}
n_{i+1} = f_i(n) = k_i + f_C(r_i) \leqslant k_i + 3r_i+1 = n-r_i - (b_i-1)k_i + 3r_i+1 = \\ = n + 2r_i + 1 + k_i - b_i k_i < n  + 2b_i + k_i - b_i k_i = n + 2 - (k_i-2)(b_i-1).
\end{multline*}
If $k_i>2$ then $n_{i+1}<n$, so $k_{i+1}$ will be less than $k_i$ but will not become negative, so at some step $i'>i$ (repeating the argument for each step) we will get $k_{i'}\leqslant2$. If $k_i=2$ or $k_i=1$ then $n_{i+1}<b_{i+1}=4b_i$, as desired.

Suppose instead that $n$ is negative. We will show that it strictly increases with each iteration until it becomes nonnegative, which is then the case already considered in the previous paragraph. Again, let $n = n_i = b_ik_i+r_i$, $r_i = r_{b_i}(n_i)$ with $k_i$ a negative integer. Then $n_{i+1} = f_i(n) = k_i + f_C(r_i)$. Suppose $k_i<-1$, then $n_{i+1}\geqslant k_i \geqslant b_i(k_i+1) > n$, i.e.\ $n_{i'}$ will increase as a function of $i'\geqslant i$ until $k_{i'}$ becomes nonnegative (and then $n_{i'}$ is nonnegative too, as desired) or equal to $-1$. In the latter case, $n_{i'+1} = f_{i'}(n_{i'})>n_{i'}$ too, because either $n_{i'+1}\geqslant0$ or $r_{i'}\in[2]$, in which case $n_{i'+1}=-1>n_{i'}$.

Finally, the Collatz trajectory of any $n\in\N$ is reached by the canonical system corresponding to $\mathcal{A}_C$ starting from $n\cdot\prod_{i=0}^{\lceil\log_4n\rceil}b_i = n\cdot4^{1+2+\dotsb+(\lceil\log_4n\rceil+1)} = n\cdot2^{(\lceil\log_4n\rceil+1)(\lceil\log_4n\rceil+2)}$.
\end{proof}

This theorem shows that if a canonical collection is given, it can be very hard to tell if it is complete or not. Actually, even for a smaller, well-behaved subset of the set of all canonical collections, there is no procedure for identifying all those collections that are complete, as we will now show.

Fractran is a programming language which takes a single natural number as input; we will also allow the zero input, for which the program halts immediately. A Fractran program is a finite sequence of positive rational numbers. To execute a Fractran program, calculate the product of the input by each fraction from the sequence in turn and replace the input by the first resulting value that is an integer (this is a single \emph{Fractran iteration}), and then repeat this while possible (producing a \emph{Fractran trajectory}).

\begin{example}
The execution of the program
\[
\left( \frac{33}{20}, \frac{5}{11}, \frac{13}{10}, \frac{1}{5}, \frac{2}{3}, \frac{10}{7}, \frac{7}{2} \right)
\]
with input 8 proceeds as
\[
8 \xmapsto{\times\frac{7}{2}} 28 \xmapsto{\times\frac{10}{7}} 40 \xmapsto{\times\frac{33}{20}} 66 \xmapsto{\times\frac{5}{11}} 30 \xmapsto{\times\frac{13}{10}} 39 \xmapsto{\times\frac{2}{3}} 26 \xmapsto{\times\frac{7}{2}} 91 \xmapsto{\times\frac{10}{7}} 130 \xmapsto{\times\frac{13}{10}} 169
\]
and halts at that point. This program, given the $N$th power of 2 as input, outputs 13 raised to the power of the Hamming weight of $N$ \cite{stay}.
\end{example}

Conway introduced Fractran and showed that it is a computationally universal programming language \cite{book:Fractran}. In particular, this fact implies that the universal halting problem for Fractran (given a program, determine whether the program evaluated on every input eventually terminates) is undecidable; a problem that has finite data as input is \emph{undecidable} if there does not exist an algorithm that solves it for any valid input.

\begin{definition}
\label{def:Fractran-type}
Let $F=(F_1,\dotsc,F_m)$ be a Fractran program. Then let $f_F\colon\,\N_0\to\N_0$ be the function that applies a single Fractran iteration of that program to its input if it is possible or returns 0 otherwise (i.e.\ if the program immediately halts on that input).

We define a canonical collection $\mathcal{A}_F$ as follows. The sequence of bases is defined to be $b_i = b^{i+1}$, where $b = 1 + \left\lceil\max_{k=1}^mF_k\right\rceil$ (we want to ensure that this is an integer greater than~$1$). The sets $T_i$ are defined through the corresponding canonical system by setting $f_i(r)$ for $r\in[b_i-1]$ to be equal to $f_F(r)$; while for $r=b_i-1$ (which is to be referred as the \emph{exceptional case}) we set $f_i(r)=1$.

We will say that a canonical collection of the form $\mathcal{A}_F$ for some $F$ is \emph{Fractran-type}.
\end{definition}
Note that the family of Fractran-type canonical collections is well-behaved in the sense that specifying a Fractran-type canonical collection requires only a finite amount of data.

\begin{theorem}
\label{thm:Fractran-type}
A Fractran program $F$ halts for all inputs if and only if $\mathcal{A}_F$ is complete.
\end{theorem}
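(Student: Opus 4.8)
The plan is to mirror the structure of the proof of Theorem~\ref{th:Collatz}, but now tracking the behaviour of trajectories in the canonical system associated with $\mathcal{A}_F$ and relating them to Fractran trajectories. The key invariant to establish is the analogue of the ``interval-preservation'' step: if the $i$th value $n_i$ of a trajectory satisfies $n_i \in [b_i - 1]$ (i.e.\ $n_i$ is a genuine, non-exceptional input to $f_i$), then $n_{i+1} = f_i(n_i) = f_F(n_i)$, and the Fractran iteration multiplies by some fraction $F_k \le \max_k F_k < b$, so $n_{i+1} \le F_k \cdot n_i < b \cdot b_i = b_{i+1}$; hence $n_{i+1} \in [b_{i+1} - 1]$ unless it happens to equal $b_{i+1}-1$ exactly. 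Once we are safely inside these intervals and avoiding the exceptional value, the trajectory of the canonical system coincides with the Fractran trajectory of $n_i$ for as long as Fractran keeps running, and it reaches $0$ exactly when the Fractran computation halts (because $f_F$ returns $0$ on a halting input). So the forward direction ``$F$ halts on all inputs $\Rightarrow$ $\mathcal{A}_F$ complete'' will follow once we show (a) every $n \in \Z$ eventually lands in one of these intervals, and (b) the exceptional case $n_i = b_i - 1$ is harmless — by construction $f_i(b_i-1) = 1$, and $1$ is an input on which every reasonable Fractran program would either continue or halt, and in any case $1 < b_{i+1}-1$ for $b \ge 3$, so we re-enter the good regime and proceed as a (possibly shifted) Fractran trajectory.

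For step (a), I would reuse the two-case argument from the Collatz proof. If $n = n_i = b_i k_i + r_i$ with $r_i = r_{b_i}(n_i)$ and $k_i \ge 1$, then $n_{i+1} = f_i(n_i) = k_i + f_F(r_i) \le k_i + (b-1) r_i < k_i + b\, r_i \le k_i + b(b_i - 1)$; comparing this with $b_{i+1} = b\, b_i$ and with $n_i$ itself, one checks that for $k_i$ large the value strictly decreases (the $b_i$-quotient drops), and for $k_i$ bounded by a small constant the value is already below $b_{i+1}$. Since the quotients $k_{i'}$ form a non-negative sequence that strictly decreases until bounded, after finitely many steps we reach an $i'$ with $n_{i'} \in [b_{i'}]$, and then either $n_{i'} \in [b_{i'}-1]$ or $n_{i'} = b_{i'}-1$ is the exceptional case handled above. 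For negative $n$, the same computation as in Theorem~\ref{th:Collatz} shows the trajectory strictly increases until it becomes non-negative: $n_{i+1} = k_i + f_F(r_i) \ge k_i$, and $k_i \ge b_i(k_i+1) > n_i$ when $k_i < -1$, with the $k_i = -1$ boundary case dispatched as before.

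For the converse, ``$\mathcal{A}_F$ complete $\Rightarrow$ $F$ halts on all inputs'', I would argue contrapositively: if some input $N$ makes $F$ run forever, I need to exhibit a starting integer whose canonical-system trajectory never reaches $0$. The natural candidate, as in the last line of the Collatz proof, is $N \cdot \prod_{i=0}^{L} b_i$ for $L$ chosen large enough (say $L \ge \log_b N$) that after $L+1$ divisions we arrive exactly at $N$ sitting in the interval $[b_{L+1}]$; one must check that the intermediate values $N \cdot \prod_{i=j+1}^{L} b_i$ are indeed the successive trajectory values, i.e.\ that each is the exceptional-free preimage under the appropriate $f_j$ (it is a multiple of $b_j$ with zero remainder, so $f_j$ just divides by $b_j$ and adds $f_F(0) = 0$), and that none of them accidentally equals an exceptional value $b_j - 1$ (they are multiples of $b_0 = b$, hence not $\equiv -1 \pmod b$, so this is automatic). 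From that point the trajectory follows the infinite Fractran trajectory of $N$ and never hits $0$, contradicting completeness of $\mathcal{A}_F$ via the characterisation that completeness is equivalent to all trajectories eventually becoming constant zero.

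The main obstacle I anticipate is bookkeeping around the exceptional case $r = b_i - 1$: it is introduced purely because $f_i$ must be defined on all of $[b_i]$ but $f_F$ is only meaningfully specified on $[b_i - 1]$ (one slot is ``stolen'' to make $b$ an integer exceeding $1$). I need to confirm that this exceptional slot never corrupts a trajectory we care about — which amounts to two observations: trajectories arising from honest Fractran computations land on values $< b_i - 1$ by the growth bound above and so never hit the exceptional value except possibly transiently, and whenever the exceptional value \emph{is} hit, the next value is the harmless $1$, from which we simply continue. Making this airtight — in particular ensuring that hitting $b_i-1$ doesn't create a spurious non-terminating loop that would make $\mathcal{A}_F$ incomplete even when $F$ halts everywhere — is the one place where a little care beyond the Collatz argument is required; everything else is a direct adaptation.
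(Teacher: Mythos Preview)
Your proposal is correct and follows essentially the same approach as the paper's proof: the same four-part structure (interval preservation so that trajectories coincide with Fractran trajectories once inside $[b_i-1]$; positive $n$ enters the interval via a decreasing argument on the quotient $k_i$ with threshold around $b-1$; negative $n$ increases to nonnegative; and every Fractran trajectory is reached by seeding with $N\cdot\prod_{i=0}^{L}b_i$). Your handling of the exceptional case $r=b_i-1$ is, if anything, slightly more careful than the paper's --- you explicitly note that intermediate seeded values have remainder $0$ and so avoid it, and that once in the Fractran regime the growth bound keeps values strictly below $b_{i+1}-1$ --- whereas the paper dispatches it more tersely.
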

\begin{proof}
The proof is similar to that of Theorem~\ref{th:Collatz}.

Suppose that the $i$th value in a trajectory in the canonical system corresponding to $\mathcal{A}_F$, $n_i$, falls into the interval $[b_i-1]$. Then the definition of $\mathcal{A}_F$ ensures that the trajectory continues as the Fractran trajectory of $n_i$, only becoming constant zero if the actual Fractran trajectory terminates and the program $F$ halts on the input $n_i$. The number $n_i=b_i-1$ hits the exceptional case and continues to the Fractran trajectory of $1$.

It remains to prove that any number $n\in\Z$ ends up on a Fractran trajectory (by getting into the interval $[b_i]$ on the $i$th step for some $i$) and that every Fractran trajectory is reached from some starting number.

Suppose first that $n=n_i$ is positive. Suppose it is not already in that interval, i.e.\ is of the form $n=b_i k_i + r_i$, $r_i= r_{b_i}(n)$ with $k_i\in\N$. In the exceptional case $n$ is immediately sent to 1 and we are done, so suppose we are not in the exceptional case. Then
\begin{multline*}
n_{i+1} = f_i(n) = k_i + f_F(r_i) < k_i + br_i = n - (b_i-1)k_i + (b-1)r_i\leqslant \\ \leqslant n - (b_i-1)k_i + (b-1)(b_i-1) = n + (b_i-1)(b-1-k_i).
\end{multline*}
If $k_i\geqslant b-1$ then $n_{i+1}<n$, so $k_{i'}$ will decrease as a function of $i'\geqslant i$ (but will not become negative), so at some step we will get $k_{i'}<b-1$. While if $k_i<b-1$, we get $n_{i+1}<b_{i+1}=bb_i$, as desired.

Suppose instead that $n=n_i$ is negative. We will show that it strictly increases with each iteration until it becomes nonnegative, which is then the case already considered in the previous paragraph. Again, let $n = b_ik_i+r_i$, $r_i= r_{b_i}(n)$ with $k_i$ a negative integer. Then $n_{i+1} = f_i(n) = k_i + f_F(r_i)$ unless we are in the exceptional case. Suppose $k_i<-1$, then $n_{i+1}\geqslant k_i \geqslant b_i(k_i+1) > n$, i.e.\ $n_{i'}$ will increase as a function of $i'\geqslant i$ until $k_{i'}$ becomes nonnegative or equal to $-1$. In the latter case, if $n_{i'}=-1$, this is the exceptional case, $n_{i'+1} = f_{i'}(n_{i'}) = k_{i'} + 1 = 0$, and we are done, so suppose this is not the exceptional case. Then either $f_F(r_{i'})>0$ and $n_{i'+1}\geqslant0$ (and then we are done) or $f_F(r_{i'})=0$ and $n_{i'+1}=-1>n_{i'}$.

Finally, for $n\in\N$, let $L=\lceil\log_b (n+1)\rceil$, then the Fractran trajectory of $n$ is reached by the canonical system corresponding to $\mathcal{A}_F$ starting from $n\cdot\prod_{i=0}^{L-1}b_i = n\cdot b^{1+2+\dotsb+L} = n\cdot b^{L(L+1)/2}$. Indeed, the trajectory of that starting number reaches $n$ after $L$ steps and $b_L = b^L\cdot b \geqslant (n+1)b > n+1$, so the trajectory does not hit the exceptional case and continues to the Fractran trajectory.
\end{proof}

\begin{corollary}
\label{cor:undecidable}
Consider the problem of determining if the Fractran-type canonical collection $\mathcal{A}_F$ corresponding to a given Fractran program $F$ is complete. That problem is undecidable.
\end{corollary}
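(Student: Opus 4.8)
The plan is to exhibit a computable many-one reduction from the universal halting problem for Fractran --- which the discussion above records as undecidable, following Conway's proof that Fractran is computationally universal --- to the problem in question. The reduction is essentially already in hand: it is the assignment $F\mapsto\mathcal{A}_F$ of Definition~\ref{def:Fractran-type}.

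First I would check that this assignment is effective. Given a Fractran program $F=(F_1,\dotsc,F_m)$ as a finite list of positive rationals, one computes $b = 1+\lceil\max_k F_k\rceil$ in finitely many steps, and then $\mathcal{A}_F$ is determined by the bases $b_i=b^{i+1}$ together with the rule for the maps $f_i$ on $[b_i]$: apply one Fractran iteration, or send $r$ to $0$ if the program halts on $r$, or send $b_i-1$ to $1$ in the exceptional case. As noted after Definition~\ref{def:Fractran-type}, this is a finite amount of data, and each value $f_i(r)$ is obtained by a bounded search through $F_1,\dotsc,F_m$; so the map $F\mapsto\mathcal{A}_F$ is computable and outputs $\mathcal{A}_F$ in a finite form that an algorithm can consume.

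Next I would invoke Theorem~\ref{thm:Fractran-type}: for every Fractran program $F$, the collection $\mathcal{A}_F$ is complete if and only if $F$ halts on all inputs. Hence a hypothetical decision algorithm that, given a Fractran-type canonical collection presented by its underlying program $F$, decides whether it is complete would compose with the computable map $F\mapsto\mathcal{A}_F$ to yield an algorithm deciding, for an arbitrary Fractran program $F$, whether $F$ halts on all inputs. Since the universal halting problem for Fractran is undecidable, no such decision algorithm exists, which is the claim.

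I do not expect a genuine obstacle here; the substance is entirely in Theorem~\ref{thm:Fractran-type} and in Conway's universality result, both of which are available to us. The only points meriting a sentence of care are: (i) that it is the \emph{universal} (totality) halting problem for Fractran, not merely the instance-wise halting problem, that is undecidable --- this follows from computational universality, the totality problem being undecidable for any Turing-complete model; and (ii) that an instance of our problem is legitimately presented by the finite datum $F$ rather than by the infinite object $\mathcal{A}_F$, which is precisely why the family of Fractran-type collections was isolated as ``well-behaved''.
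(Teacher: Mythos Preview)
Your proposal is correct and matches the paper's approach: the corollary is stated without a separate proof, being an immediate consequence of Theorem~\ref{thm:Fractran-type} together with the undecidability of the universal halting problem for Fractran recorded earlier. Your explicit check that $F\mapsto\mathcal{A}_F$ is computable and finitely presented is a reasonable elaboration of what the paper leaves implicit.
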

\begin{remark}
Results similar to the corollary above are known in the field of dynamical systems, such as \cite{mortality}.
In the context of tiling problems, undecidability results are known for problems like ``given one subset of a certain abelian group, find another one such that they together form an additive system for that group'', such as in \cite{grtao}.
\end{remark}

\section*{Acknowledgements}
This research is supported by EPSRC grant EP/W524098/1.
I am grateful to Matthew Lettington for introducing me to additive systems and to Ian Short, Matty van Son, and Katherine Staden for useful discussions.

\printbibliography

\end{document}